\tikzset{
v/.style={draw, fill, circle, minimum size=1.5mm, inner sep=0},
b/.style={draw , circle, minimum size=2.5mm, inner sep=.5mm},
e/.style={very thick},
vs/.style={draw, fill, circle, minimum size=1mm, inner sep=0},
bs/.style={draw, fill, circle, minimum size=1.5mm, inner sep=0mm},
es/.style={thick}
}
\newlength{\nodeheight}
\newlength{\nodewidth}
\numberwithin{equation}{section}
\declaretheorem[name=Theorem, sibling=equation]{theorem}
\declaretheorem[name=Lemma, sibling=equation]{lemma}
\declaretheorem[name=Proposition, sibling=equation]{proposition}
\declaretheorem[name=Definition, style=definition, sibling=equation]{definition}
\DeclareMathOperator{\Hom}{Hom}
\DeclareMathOperator{\End}{End}
\newcommand{\C}{{\mathscr{C}}}
\newcommand{\N}{{\mathbb{N}}}
\newcommand{\R}{{\mathscr{R}}}
\newcommand{\ov}[1]{\overline{#1}}
\newcommand{\inj}{\hookrightarrow}
\newcommand{\sur}{\twoheadrightarrow}
\newcommand{\FI}{{\rm FI}}
\newcommand{\RBr}{\R{\rm Br}}
\newcommand{\Br}{{\rm Br}}
\title{Noetherianity of Diagram Algebras}
\author{Anthony Muljat and Khoa Ta}
\address{}
\email{}
\begin{document}

\begin{abstract}
In this short paper, we establish the local Noetherian property for the linear categories of Brauer, partition algebras, and other related categories of diagram algebras with no restrictions on their various parameters.
\end{abstract}

\maketitle

\section{Introduction}
In this paper, we prove the local Noetherian property for the linear categories of Brauer and partition algebras as introduced by Patzt in \cite{Patzt}. Additionally, we extend our results to two related categories of diagram algebras: the Rook-Brauer and Rook algebras.  Our results are valid without any restrictions on the various parameters of these algebras.

For a commutative Noetherian ring $R$, the {\it partition algebra} $P_n = P_n(R,\delta)$ with parameter $\delta \in R$ is a free $R$-module over the basis of all partitions of the union of the sets $[-n] = \{-n,\ldots,-1\}$ and $[n] = \{1,\ldots,n\}$. Each basis element of $P_n$ may be visualized by a diagram $\alpha$ with two vertical columns of $n$ nodes each, with the nodes on the left representing $-n$ through $-1$ and the nodes on the right representing $1$ through $n$, with paths connecting certain nodes. The connected components of $\alpha$ indicate the blocks of the corresponding partition; we identify any diagrams whose components contain the same nodes. 

\begin{figure}[h!]
\centering
\includegraphics[scale=0.7]{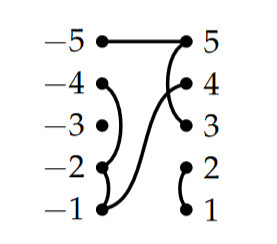}
\caption{Visualization of the partition $\{\{-5,5,3\},\{-4,-2,-1,4\},\{-3\},\{1,2\}\}$.}
\label{fig:P5ex}
\end{figure}

For partitions $p$ and $q$, we define the product $pq$ in the following way. First we conjoin the diagrams $p$ and $q$ by identifying each node $k$ on the right of $p$ with the node $-k$ on the left of $q$, thus forming a diagram with $3$ columns of $n$ nodes each. We then form a new partition $s$ of $[-n] \cup [n]$ in which two elements belong to the same block if and only if the corresponding nodes in the left or right column of the conjoined diagram are connected. We define $pq = \delta^r \cdot s$ where $r$ is the number of connected components in the conjoined diagram that only contain nodes in the middle column.

As an example, below we have a multiplication of two basis diagrams in $P_5(R,\delta)$. Note that there is a factor of $\delta$ in the result to indicate a single connected component (shown in red) that only contains nodes in the middle column.

\begin{figure}[h!]
\centering
\includegraphics[scale=0.7]{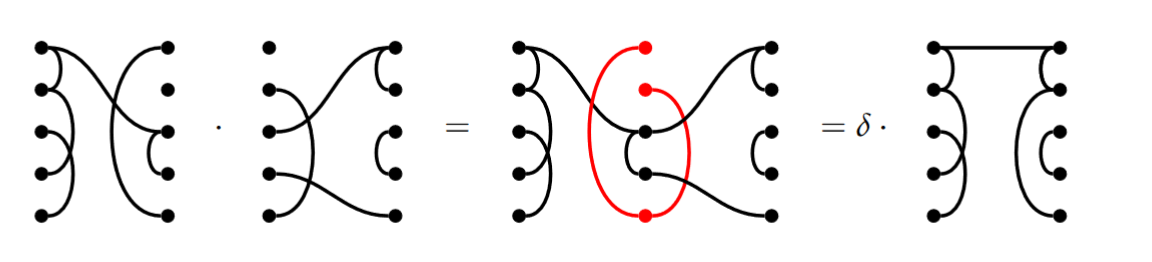}
\caption{Multiplication in $P_5(R,\delta)$.}
\label{fig:MultP5}
\end{figure}

The {\it Brauer algebra} $\Br_n = \Br_n(R,\delta)$ is the free subalgebra of $P_n(R,\delta)$ generated by partitions that are perfect matchings; that is, the blocks in each partition have size 2. Thus, each basis element is represented by a diagram in which each node is connected to exactly one other node.

Two other diagram algebras related to the partition algebra are the Rook-Brauer algebra and the Rook algebra. For a commutative Noetherian ring $R$, the {\it Rook-Brauer algebra} $\RBr_n = \RBr_n(R, \delta,\epsilon)$ with parameters $\delta, \epsilon \in R$ is a free $R$-module with basis consisting of partitions of $[-n] \cup [n]$ whose blocks have size $\le 2$. Thus, each basis element is represented by a diagram in which each node is connected to at most one other node. The product of two basis diagrams $p,q \in \RBr_n$ is defined by $pq = \delta^c \epsilon^d s$ where $s$ is the partition obtained as described above, with $c$ factors of $\delta$ for $c$ loops and $d$ factors of $\epsilon$ for $d$ contractible components in the conjoined diagram that only contain nodes in the middle column (see example below).

\begin{figure}[h!]
\centering
\includegraphics[scale=0.7]{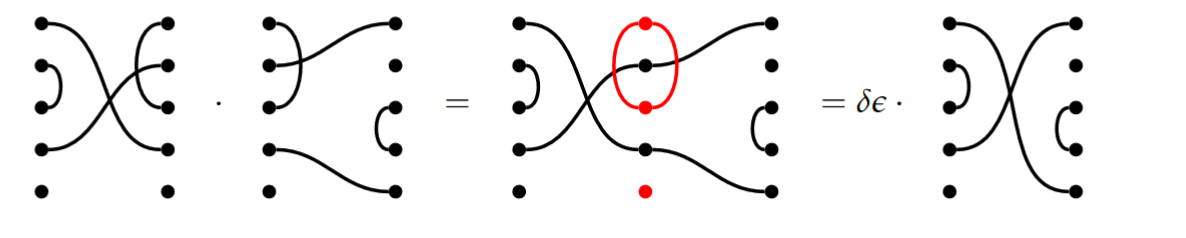}
\caption{Multiplication in $\RBr_5(\delta,\epsilon)$.}
\label{fig:MultP5}
\end{figure}

\noindent Here, the final diagram is multiplied by a factor of $\delta$ for the red loop and a factor of $\epsilon$ for the isolated node in the middle.

The {\it Rook algebra} $\R_n = \R_n(R, \epsilon)$ is the free subalgebra of $\RBr_n$ generated by diagrams without any left-to-left or right-to-right edges (i.e. the {\it invertible} diagrams of $\RBr_n$). Note that the parameter $\delta$ can be removed as multiplication of diagrams with no left-to-left or right-to-right edges cannot yield any loops in the middle. 

\section{Main Result}
\noindent We now introduce the background necessary to discuss our main results. In the following, we fix a commutative Noetherian ring $R$ and parameters $\delta, \epsilon \in R$.

Following Patzt's \cite{Patzt} convention, given a nonnegative integer $n \in \N$, we will denote by $A_n$ any one of $P_n$, $\Br_n$, $\RBr_n$, or $\R_n$. For each $A_n$, there is a canonical choice of a ``trivial'' $A_n$-module $R$ on which all invertible diagrams act trivially and which is annihilated by all other diagrams. We denote by $A_{n-m}$ the subalgebra of $A_n$ generated by diagrams with horizontal connections $\{-k,k\}$ for all $1 \le k \le m$.

Denote by $\{A_n\}$ any one of the sequences $\{P_n\}$, $\{\Br_n\}$, $\{\RBr_n\}$, or $\{\R_n\}$. We define a category $\C_A$ enriched over the category of $R$-modules by the following data:
\begin{itemize}
    \item The objects of $\C_A$ are the nonnegative integers $\N$.
    \item If $m \le n$, we define $\Hom_{\C_A}(m,n) := A_n \otimes_{A_{n-m}} R$; otherwise $\Hom_{\C_A}(m,n) := 0$ is the trivial $R$-module. Note that $\End_{\C_A}(n) \cong A_n$.
    \item By \cite{Patzt}*{Thm. 2.1}, there is a surjection
    \[
        \Hom_{\C_A}(m,n) \otimes \Hom_{\C_A}(l,m) \to \Hom_{\C_A}(l,n) \qquad (l \le m \le n)
    \]
    which provides the composition $l \to m \to n$ for $l \le m \le n$; otherwise the composition is $0$.
\end{itemize}

We thereby obtain categories $\C_{P}$, $\C_{\Br}$, $\C_{\RBr}$, and $\C_{\R}$ enriched over the category of $R$-modules corresponding to the sequences $\{P_n\}$, $\{\Br_n\}$, $\{\RBr_n\}$, and $\{\R_n\}$ respectively.

\begin{definition}
    A \emph{$\C_A$-module} $V$ is a functor from the category $\C_A$ to the category of $R$-modules. For each $n \in \N$, we denote the $R$-module $V(n)$ by $V_n$.
\end{definition}

A \emph{$\C_A$-module map} is a natural transformation of $\C_A$-modules. Given $\C_A$-modules $V$ to $W$, if $W_n \subseteq V_n$ for each $n$ and the map $W \to V$ induced by the inclusions $W_n \hookrightarrow V_n$ is a $\C_A$-module map, then $W$ is a \emph{$\C_A$-submodule} of $V$. We say that $V$ is \emph{finitely generated} if there exists a finite set $S\subseteq \underset{n}{\sqcup}~ V_n$ such that the smallest $\C_A$-submodule of $V$ containing $S$ is $V$ itself.  

\begin{definition}
    A $\C_A$-module $V$ is \emph{Noetherian} if every $\C_A$-submodule of $V$ is finitely generated.  We say $\C_A$ is \emph{locally Noetherian} over $R$ if every finitely generated $\C_A$-module is Noetherian.
\end{definition}

We can now state our main result. 

\begin{theorem}
    For $A = P$, $\Br$, $\RBr$, or $\R$, the category $\C_A$ is locally Noetherian.
\end{theorem}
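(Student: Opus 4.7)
The plan is to establish local Noetherianity via the well-partial-order / Gröbner basis methodology of Sam--Snowden for representations of combinatorial categories. The first task is to give an explicit combinatorial description of the hom-spaces $\Hom_{\C_A}(m, n) = A_n \otimes_{A_{n-m}} R$. Using the fact that invertibles in $A_{n-m}$ permute the trailing $(n-m)$ pairs of $A_n$-diagrams while non-invertibles act as zero on the trivial module, one obtains a basis $D_A(m, n)$ of orbit representatives, with certain ``internal'' non-invertible structure quotiented away. For each type $A$ this basis admits a concrete combinatorial description (set-partitions with markings for $P$, matching-like structures with through-strands for the Brauer and Rook-Brauer cases, partial injections for $\R$), and composition of $\C_A$-morphisms corresponds to stacking of the corresponding diagrams with factors of $\delta^c \epsilon^d$ for closed middle components.

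Next, I would define a partial order $\preceq$ on $\bigsqcup_n D_A(m, n)$ by declaring $d \preceq d'$ iff $d'$ is obtained from $d$ by post-composition with some morphism of $\C_A$; the basis elements appearing in any $\C_A$-submodule $W \subseteq M_A(m) := \Hom_{\C_A}(m, -)$ then form an upward-closed set under $\preceq$. The central technical lemma is that $\preceq$ is a well-partial-order, which I would establish by encoding each basis element as a labeled sequence of blocks or edges (ordered, say, by smallest node) and applying Higman's lemma, with a nested application in the $A = P$ case to accommodate blocks of unbounded size. Granted the wpo property, every $\C_A$-submodule of $M_A(m)$ is finitely generated. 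Local Noetherianity then follows by standard formalities: every finitely generated $\C_A$-module is a quotient of a finite direct sum of representables $M_A(m_i)$, and submodules of Noetherian modules are Noetherian.

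I expect the main obstacle to lie in proving the wpo property uniformly across all values of $\delta, \epsilon$, especially for $\C_P$ and $\C_{\Br}$. For $\C_P$, the unbounded block sizes force a carefully-designed nested encoding before Higman's lemma applies. For $\C_{\Br}$, the parameter $\delta$ interacts nontrivially with composition via closed loops, and when $\delta$ is a zero-divisor (or zero) the basis $D_A(m, n)$ may acquire extra torsion relations that complicate both the combinatorial description in step one and the passage from upward-closed subsets to $\C_A$-submodules in step two. Addressing this uniformly -- perhaps by augmenting the wpo with torsion data, or by a base-change comparison with the generic ring where $\delta, \epsilon$ are indeterminates -- is the key input needed to remove the parameter restrictions present in prior work.
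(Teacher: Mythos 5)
Your strategy is genuinely different from the paper's, and it has a gap at exactly the point you yourself flag as the ``main obstacle'' --- which is unfortunately the entire content of the theorem once parameter restrictions are dropped. The Sam--Snowden Gr\"obner formalism requires the action of morphisms on the chosen basis to be monomial in a strong sense: each morphism must send a basis element to a basis element (up to at worst a unit scalar), so that submodules are controlled by upward-closed sets of leading terms in a well-partial-order. In $\C_A$ the composition of two basis diagrams is $\delta^c\epsilon^d$ times a basis diagram, and when $\delta$ or $\epsilon$ is zero or a zero-divisor this scalar can annihilate or degrade the result. Consequently your claim that ``the basis elements appearing in any $\C_A$-submodule $W$ form an upward-closed set under $\preceq$'' fails in general (post-composition can kill a basis vector outright), and even where it holds, a wpo on the index set does not by itself yield finite generation of $W$ without a compatible leading-term theory, which is precisely what the non-unit scalars obstruct. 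Your proposed remedies (augmenting the wpo with torsion data, base change to generic parameters) are not carried out, and the base-change idea does not obviously descend: Noetherianity over $R[\delta,\epsilon]$ with indeterminate parameters does not formally imply it for an arbitrary specialization.

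The paper sidesteps this issue entirely. It constructs functors $F,G:\FI\to\C_A$ sending an injection $\alpha:[m]\to[n]$ to the diagram with through-strands $\{-\alpha(i),i\}$ and the remaining left nodes attached to the blob (or forming marked singleton blocks, in the partition case); composition with such diagrams never closes a middle component, so no factors of $\delta$ or $\epsilon$ ever arise and the parameters become irrelevant. A pigeonhole argument (Lemmas \ref{Lemma.for.Partition} and \ref{Lemma.for.Other}) shows that every basis diagram of $M_{\C_A}(m)_j$ for $j$ large has a strippable left node, whence $M_{\C_A}(m)$ is finitely generated as an $\FI$-module (Propositions \ref{Partition.fg} and \ref{RBrfgprop}); the theorem then follows from the Noetherianity of $\FI$ over a Noetherian ring \cite{cefn}. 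If you wish to salvage your approach, observe that the paper's argument is in effect a degenerate instance of yours in which the order is induced only by the parameter-free morphisms coming from $\FI$; restricting to those morphisms is one concrete way to make the monomial condition hold uniformly in $\delta$ and $\epsilon$.
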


The local Noetherian property for the categories of partition, Brauer, and Temperley-Lieb algebras, with the parameter $\delta$ chosen such that each algebra is semi-simple, was previously established in \cite{Gan-Ta}. Our result extends this by removing restrictions on various parameters, thereby providing a more general statement.

The crux of the main argument is to establish the existence of a functor from the category $\FI$ to the category $\C_A$ when $A = P$, $\RBr$, or $\R$. This implies that any $\C_A$-module, when pulled back, also becomes an $\FI$-module. Given that the local Noetherianity property is already known for the category $\FI$ \cite{cefn}*{Thm. A}, this result consequently extends the local Noetherianity property to the category $\C_A$. Unfortunately, this argument does not hold for the Temperley-Lieb algebra, as there is no nontrivial functor from the category $\FI$ to the category of Temperley-Lieb algebras. Therefore, we leave the reader with an open question.

\vskip1em
\noindent\textit{{\bf Question:} For any parameter $\delta$, is the category of Temperley-Lieb algebras $\C_{TL(\delta)}$ locally Noetherian?}  

\noindent {\bf Acknowledgement}: We would like to thank our advisor, Dr. Wee Liang Gan for his
useful suggestions and helpful discussion.
\section{Proof of Main Result}
It is known \cite{Patzt}*{Prop. 2.3} that $\Hom_{\C_{\Br}}(m,n)$ is a free $R$-module with a basis which may be represented by diagrams with $n$ nodes on the left, $m$ nodes on the right and an ``$(n-m)$-blob'' which is connected to exactly $n-m$ nodes, such that every node is connected to exactly one other node or the blob.  Similarly, by \cite{Patzt}*{Prop. 2.5}, $\Hom_{\C_P}(m,n)$ is a free $R$-module with basis given by the set of all partitions of $[-n] \cup [m]$ with $n-m$ ``marked'' blocks.

We now prove an analogous result for categories of Rook and Rook-Brauer algebras.

\begin{proposition} \label{RBrprop}
    $\Hom_{\C_{\RBr}}(m,n)$ is a free $R$-module with a basis consisting of diagrams with $n$ nodes on the left, $m$ nodes on the right and an ``$(n-m)$-blob'' which is connected to exactly $n-m$ nodes, such that every node is connected to at most one other node or the blob.
\end{proposition}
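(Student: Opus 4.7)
The plan is to mirror Patzt's argument for the Brauer case (\cite{Patzt}*{Prop. 2.3}), adapted to accommodate the possibility of isolated nodes in $\RBr_n$. Let $F$ denote the free $R$-module on the set of blob diagrams claimed in the proposition. I will define an $R$-linear map $\phi : \RBr_n \to F$ and an inverse $\psi : F \to \RBr_n \otimes_{\RBr_{n-m}} R$, then verify that they descend to give an isomorphism.

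Call the $n$ left nodes and the first $m$ right nodes of a diagram in $\RBr_n$ \emph{visible}, and the remaining $n-m$ right nodes \emph{hidden}. Say $\alpha \in \RBr_n$ is \emph{good} if every hidden node is matched to a visible node in $\alpha$, and \emph{bad} otherwise. Define $\phi(\alpha) = 0$ when $\alpha$ is bad; when $\alpha$ is good, let $\phi(\alpha)$ be the blob diagram that inherits the edges and isolations of $\alpha$ among visible nodes, with the blob connected to precisely those visible nodes matched to a hidden node in $\alpha$. To show $\phi$ descends to the tensor product, I verify that $\phi(\alpha\beta) = \phi(\alpha)$ when $\beta \in \RBr_{n-m}$ is a permutation of the hidden positions, and $\phi(\alpha\beta) = 0$ when $\beta$ is non-invertible. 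The first is immediate, as right multiplication by such a permutation merely shuffles the hidden matchings of $\alpha$ without altering which visible nodes meet the blob. For the second, write $\alpha\beta = \delta^c \epsilon^d \gamma$ and observe by a short counting argument that any non-permutation element of $\RBr_{n-m}$ must have either an isolated right node or a right-to-right horizontal edge on some hidden position---otherwise each hidden right node is absorbed into a left-right edge with a hidden left node, exhausting the hidden lefts and forcing a permutation---so that $\gamma$ inherits this bad feature directly from $\beta$.

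Define $\psi : F \to \RBr_n \otimes_{\RBr_{n-m}} R$ by sending each blob diagram $D$ to the class of any lift $\tilde D \in \RBr_n$ obtained by assigning the $n-m$ blob connections to the hidden right nodes in some chosen order; independence of the chosen order follows from the permutation relation just verified, and $\phi \circ \psi = \Id_F$ holds by construction. The main obstacle is verifying $\psi \circ \phi = \Id$ on the quotient, which reduces to showing $\alpha \otimes 1 = 0$ for every bad $\alpha \in \RBr_n$ with no assumption on the parameters $\delta, \epsilon$. I handle this by case analysis on the bad feature of $\alpha$: in each case, I construct $\tilde\alpha \in \RBr_n$ and a non-invertible $\beta \in \RBr_{n-m}$ satisfying $\tilde\alpha \beta = \alpha$ \emph{exactly}---that is, with no $\delta$ or $\epsilon$ scalar factor---forcing $\alpha \otimes 1 = \tilde\alpha \otimes \beta \cdot 1 = 0$. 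When $\alpha$ has an isolated hidden right node $R_j$, parity of the total number of isolated nodes in $\alpha$ produces another isolated node (visible or hidden), and pairing $R_j$ with it in $\tilde\alpha$ together with $\beta$ having both nodes at position $j$ isolated recovers $\alpha$ cleanly. When $\alpha$ has a hidden-to-hidden horizontal edge $\{R_j, R_{j'}\}$, a counting argument supplies either a visible-to-visible edge or a pair of isolated visible nodes through which to reroute the connections in $\tilde\alpha$, and an appropriate $\beta$ with horizontal edges on both sides at the two hidden positions (or appropriate isolations on the hidden left nodes) completes the construction without introducing loops or middle-only components in the product.
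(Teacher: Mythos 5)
Your proposal is correct and rests on exactly the same key computation as the paper's proof: every ``bad'' diagram (one with an isolated node or a right-to-right edge among the positions $m+1,\dots,n$) factors \emph{exactly}, with no $\delta$ or $\epsilon$ scalar, as a diagram times a non-invertible element of $\RBr_{n-m}$, via the same parity and counting arguments, while good diagrams modulo permutations of the hidden positions biject with blob diagrams. The only difference is packaging --- you build explicit mutually inverse maps $\phi$ and $\psi$, whereas the paper kills the submodule $J$ of bad diagrams using right-exactness of $-\otimes_{\RBr_{n-m}} R$ and then factors the residual action through $RS_{n-m}$ --- so this is essentially the paper's argument. (One small point to tighten: in your right-to-right case the counting argument should be stated as producing a left-to-left edge or two isolated nodes specifically on the \emph{left}, which is what the rerouting construction actually uses.)
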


\begin{proof}
    Let $J \subset \RBr_n$ be the free $R$-submodule generated by diagrams which have either an isolated node $k$ with $m+1 \le k \le n$ or a right-to-right connection $\{i,j\}$ with $m+1 \leq i, j \le n$. Note that $J$ is closed under right multiplication by $\RBr_{n-m}$.
    
    We claim that the image of the map $J \otimes_{\RBr_{n-m}} R \to \RBr_n \otimes_{\RBr_{n-m}} R$ induced by the inclusion $J \inj \RBr_n$ is $0$. To prove this, we will show that every basis diagram $\alpha \in J$ may be written $\alpha = \beta \gamma$, where $\beta$ and $\gamma$ are basis diagrams of $\RBr_n$ and $\RBr_{n-m}$ respectively and $\gamma$ is non-invertible. Thus we will have
    \[
        \alpha \otimes 1 \mapsto \beta\gamma \otimes 1 = \beta \otimes \gamma \cdot 1 = \beta \otimes 0 = 0.
    \]
    
    {\it Case 1.} Assume $\alpha \in J$ has an isolated node $k$ between $m+1$ and $n$. Since $\alpha$ must have an even number of isolated nodes, we can find  another isolated node either $k'$ on the left or right of $\alpha$. Let $\beta \in \RBr_n$ be the diagram with the connection $\{k, k'\}$ and all other connections are the same as in $\alpha$. Let $\gamma \in \RBr_{n-m}$ be the diagram with a pair of isolated nodes on opposite sides at $\{k\}$, $\{-k\}$ with all other connections left-to-right $\{-l,l\}$. Then $\gamma$ is non-invertible and $\alpha = \beta \gamma$.

    {\it Case 2.} Now assume $\alpha$ has no isolated nodes between $m+1$ and $n$. This implies that $\alpha$ has a right-to-right connection $\{i,j\}$ where $m+1 \leq i,j \leq n$. It follows that $\alpha$ has either a pair of isolated nodes on the left hand side or a left-to-left connection. Denote this pair or connection by $(i',j')$, let $\beta \in \RBr_n$ be the diagram with connections $\{i, i'\}$ and $\{j, j'\}$ with all other connections the same as in $\alpha$. If $i'$ and $j'$ are left-to-left connected in $\alpha$, let $\gamma \in \RBr_{n-m}$ be the diagram with connections $\{-i,-j\}$, $\{i,j\}$, and all other connections left-to-right $\{-l, l\}$. If $i'$ and $j'$ are isolated nodes in $\alpha$, let $\gamma \in \RBr_{n-m}$ be the diagram with isolated nodes at $\{-i\}$, $\{-j\}$ and the connection $\{i,j\}$ with all other connections left-to-right $\{-l, l\}$. In either case $\gamma$ is non-invertible and $\alpha = \beta \gamma$. This completes the proof of the claim.
    
    The inclusion $J \inj \RBr_n$ and projection $\RBr_n \sur \RBr_n / J$ fit into a short exact sequence
    \[
        0 \to J \inj \RBr_n \sur \RBr_n / J \to 0.
    \]
    Applying $- \otimes_{\RBr_{n-m}} R$ yields the exact sequence
    \[
        J \otimes_{\RBr_{n-m}} R \to \RBr_n \otimes_{\RBr_{n-m}} R \to \RBr_n / J \otimes_{\RBr_{n-m}} R \to 0.
    \]
    Since $J \otimes_{\RBr_{n-m}} R \to \RBr_n \otimes_{\RBr_{n-m}}$ is $0$, it follows that 
    \begin{equation}\label{eq1}
        \RBr_n \otimes_{\RBr_{n-m}} R \cong \RBr_n / J \otimes_{\RBr_{n-m}} R.
    \end{equation}
    Note that the images in $\RBr_n / J$ of diagrams in $\RBr_n$ such that every node $i$ with $m \le i \le n$ is connected to exactly one other node $j$ with $j \le m$ form a basis of $\RBr_n / J$.
    
    Now let $I \subset \RBr_{n-m}$ be the free $R$-submodule generated by all noninvertible diagrams in $\RBr_{n-m}$. Then $I$ annihilates $R$ and $\RBr_n / J$. It follows that the action of $\RBr_{n-m}$ on $R$ and $\RBr_{n-m} / J$ factors through $\RBr_{n-m} / I \cong RS_{n-m}$, where $S_{n-m}$ is the subgroup of $S_n$ which acts trivially on $\{1, \ldots, m\}$ and acts by permutation on $\{m+1, \ldots, n\}$. Hence
    \begin{equation}\label{eq2}
        \RBr_n / J \otimes_{\RBr_{n-m}} R \cong \RBr_n / J \otimes_{RS_{n-m}} R.
    \end{equation}
    Together, (\ref{eq1}) and (\ref{eq2}) imply
    \begin{equation}\label{eq3}
        \RBr_n \otimes_{\RBr_{n-m}} R \cong \RBr_n / J \otimes_{RS_{n-m}} R.
    \end{equation}
    Since $S_{n-m}$ permutes the basis of $\RBr_n / J$, it follows that $\RBr_n / J \otimes_{RS_{n-m}} R$ is a free $R$-module with the desired basis. The result then follows from (\ref{eq3}).
\end{proof}

The analogous result for $\Hom_{\C_{\R}}(m,n)$ may be obtained by restricting the diagrams to those without left-to-left or right-to-right connections. We adjust the proof slightly to account for this restriction.

\begin{proposition} \label{Rprop}
    $\Hom_{\C_{\R}}(m,n)$ is a free $R$-module with a basis consisting of diagrams with $n$ nodes on the left, $m$ nodes on the right and an ``$(n-m)$-blob'' which is connected to exactly $n-m$ nodes, such that every node is connected to at most one other node or the blob, and all connections are left-to-right. 
\end{proposition}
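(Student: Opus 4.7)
The plan is to mirror the proof of Proposition \ref{RBrprop}, with the adjustments required by the fact that Rook diagrams admit no left-to-left or right-to-right connections. Define $J \subset \R_n$ to be the free $R$-submodule generated by basis Rook diagrams having at least one isolated node $k$ on the right with $m+1 \le k \le n$. Since right multiplication by $\R_{n-m}$ only affects nodes in positions $m+1,\ldots,n$, a short case analysis according to whether a given $\gamma \in \R_{n-m}$ leaves $-k$ isolated or connects it to some right node shows that $J$ is closed under this action.

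Next, I would show that the map $J \otimes_{\R_{n-m}} R \to \R_n \otimes_{\R_{n-m}} R$ is zero by exhibiting, for each basis diagram $\alpha \in J$, a factorization $\alpha = \beta \gamma$ with $\beta \in \R_n$ and $\gamma \in \R_{n-m}$ non-invertible; since $\gamma$ then acts as zero on the trivial module $R$, this kills $\alpha \otimes 1$ as in the $\RBr$ argument. The key observation enabling the construction is that every Rook diagram has the same number of isolated nodes on the left as on the right (both equal to $n$ minus the number of left-to-right edges). Hence an isolated right node $k$ in $\alpha$ forces the existence of an isolated left node $k'$ somewhere in $\alpha$. I take $\beta$ to be $\alpha$ with the additional left-to-right connection $\{k', k\}$, and $\gamma \in \R_{n-m}$ to be the diagram whose connections are $\{-l, l\}$ for all $l > m$ with $l \ne k$, together with isolated nodes at $-k$ and $k$; one then verifies directly that $\alpha = \beta \gamma$ and that $\gamma$ is non-invertible.

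From here, the argument runs in parallel with the $\RBr$ case: the short exact sequence $0 \to J \inj \R_n \sur \R_n/J \to 0$ together with the previous step yields $\R_n \otimes_{\R_{n-m}} R \cong \R_n/J \otimes_{\R_{n-m}} R$, and since the ideal $I$ of non-invertible diagrams in $\R_{n-m}$ annihilates both $R$ and $\R_n/J$, the action factors through $\R_{n-m}/I \cong RS_{n-m}$, giving $\R_n/J \otimes_{\R_{n-m}} R \cong \R_n/J \otimes_{RS_{n-m}} R$. A basis of $\R_n/J$ is furnished by the Rook diagrams in which every right node in $\{m+1,\ldots,n\}$ is connected (necessarily via a left-to-right edge) to a distinct left node, and the right $S_{n-m}$-action permutes these $n-m$ pairings; the resulting orbits correspond bijectively to the blob-diagrams described in the proposition.

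The main obstacle is step (b): the $\RBr$ proof had the luxury of using right-to-right connections in its Case 2 to handle diagrams all of whose right-side positions in $\{m+1,\ldots,n\}$ are paired with one another. In the Rook setting this case cannot arise at all, so every $\alpha \in J$ must be handled by a Case-1-style construction, and this works precisely because the Rook parity argument always supplies a matching isolated left node with which to form the required left-to-right edge.
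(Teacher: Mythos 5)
Your proposal is correct and follows essentially the same route as the paper: the same submodule $J$ of diagrams with an isolated node $k$ for $m+1 \le k \le n$, the same parity observation that Rook diagrams have equal numbers of isolated nodes on each side, the same factorization $\alpha = \beta\gamma$ with $\gamma$ non-invertible, and the same passage through $\R_n/J \otimes_{RS_{n-m}} R$. The only cosmetic difference is that the paper pairs off all such isolated nodes at once when building $\beta$ and $\gamma$, whereas you use a single node $k$; both yield a valid factorization.
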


\begin{proof}
    Let $J \subset \R_n$ be the free $R$-submodule generated by diagrams which have at least one isolated node $k$ with $m+1 \le k \le n$. As in Proposition \ref{RBrprop}, we claim that the image of $J \otimes_{\R_{n-m}} R \to \R_n \otimes_{\R_{n-m}} R$ is $0$.  Note that diagrams in Rook algebras must have the same number of isolated nodes on the left as on the right. Let $\alpha$ be any diagram in $J$. To each isolated node $k$ between $m+1$ and $n$, we may associate a unique isolated node $k' < 0$. Let $\beta \in \R_n$ be the diagram where the pairs $(k,k')$ are connected and all other nodes are connected as they are in $\alpha$. Let $\gamma \in \R_{n-m}$ be the diagram where all the $k$ and $-k$ nodes are isolated and all other nodes are connected horizontally. Then $\gamma$ is noninvertible and $\alpha = \beta\gamma$. The rest of the proof is the same, mutatis mutandis, as the proof of Proposition \ref{RBrprop}.
\end{proof}

Recall the category $\FI$ whose objects are the nonnegative integers and $\Hom_\FI(m,n)$ consists of injective maps $[m] \to [n]$ where $[m] = \{1, \ldots, m\}$; note that when $m = n$, we have $\Hom_\FI(m,n) = S_n$, the symmetric group on $n$ objects. An {\it $\FI$-module} is a functor from the category $\FI$ to the category of $R$-modules.

For $A = \Br$, $\RBr$, or $\R$, define a functor $F: \FI \to \C_{A}$ by the following data:
\begin{itemize}
    \item $F(n) = n$ for all $n \in \mathbb{Z}_{\ge 0}$.
    \item For all $\alpha \in \Hom_\FI(m,n)$, let $F(\alpha) \in \Hom_{\C_{A}}(m,n)$ be the basis diagram with connections $\{ -\alpha(i), i\}$ for all $i \in [m]$, and all $n-m$ remaining nodes on the left are connected to the $(n-m)$-blob.
\end{itemize}
\begin{figure}[h!]
\centering
\includegraphics[scale=0.7]{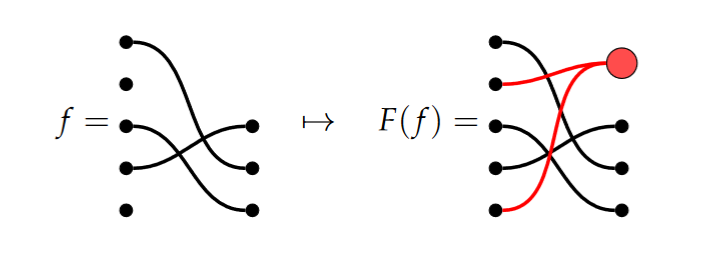}
\caption{Example of $F(f) \in \Hom_{\C_A}(3,5)$ with an injection $f:[3] \to [5]$} 
\end{figure}
Note that in the above example, the 2 nodes on the left of $F(f)$ not in the image of $f$ are connected to the $(5-3)$-blob on the right.

When $A = P$, define the functor $G: \FI \to \C_P$ by the following data:
\begin{itemize}
    \item $G([n]) =[n] $ for all $n \in \mathbb{Z}_{\ge 0}$. 
    \item For all $\alpha \in \Hom_\FI(m,n)$, let $F(\alpha) \in \Hom_{\C_{A}}(m,n)$ be the basis diagram with connections $\{ -\alpha(i), i\}$ for each $i \in [m]$ and all the remaining $n-m$ nodes on the left constitute the $n-m$ marked blocks.
\end{itemize}
\begin{figure}[h!]
\centering
\includegraphics[scale=0.7]{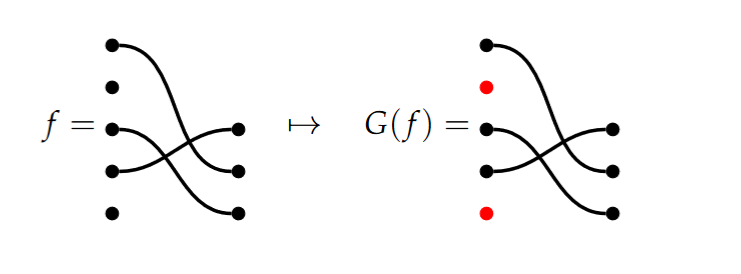}
\caption{Example of $G(f) \in \Hom_{\C_P}(3,5)$ with an injection $f:[3] \to [5]$} 
\end{figure}
Note that in the above example, $G(f)$ has $5-3=2$ marked blocks (colored red) in the form of two isolated nodes. 

With these functors, any $\C_{A}$-module may be considered as an $\FI$-module by pulling back along either $F$ or $G$.

For all $m \in \N$, we define the {\it free $\C_A$-module} $M_A(m)$ by the following data:
\begin{itemize}
    \item For all $n \in \N$, $M_A(m)_n$ is the free $R$-module generated on $\Hom_{\C_A}(m,n)$.
    \item The action of $\C_A$-morphisms is given by postcomposition; that is, for all $\alpha \in \Hom_{\C_A}(n,p)$, the induced map $M_A(m)(\alpha) : M_A(m)_n \to M_A(m)_p$ is defined by $\beta \mapsto \alpha \beta$ for all $\beta \in \Hom_{\C_A}(m,n)$.
\end{itemize}
Our objective now is to demonstrate that the free $\C_A$-module is finitely generated as an $\FI$-module. We start with the case when $A = P$, for which we establish a quick lemma.

\begin{lemma} \label{Lemma.for.Partition}
    If $j \geq 5m$, then any basis diagram $\alpha \in M_{\C_P}(m)_j$ has at least $m+1$ marked blocks consisting of a single node.
\end{lemma}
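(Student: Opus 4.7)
The plan is a short double-counting argument. Recall that a basis diagram $\alpha \in M_{\C_P}(m)_j = \Hom_{\C_P}(m,j)$ is a partition of the $(j+m)$-element set $[-j] \cup [m]$ in which exactly $j-m$ of the blocks are marked. I would let $a$ denote the number of singleton marked blocks (the quantity we want to bound below) and let $b$ denote the number of marked blocks of size $\ge 2$, so that $a + b = j - m$.

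The key step is the observation that the total number of elements lying in marked blocks is at most $j+m$, while it is at least $a \cdot 1 + b \cdot 2 = a + 2b$. Combining $a + 2b \le j + m$ with $a + b = j - m$ and subtracting yields $b \le 2m$, hence
\[
    a = (j-m) - b \ge j - 3m.
\]
Substituting the hypothesis $j \ge 5m$ gives $a \ge 2m$, which is $\ge m+1$ for $m \ge 1$ (the case $m = 0$ being vacuous or trivial).

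There is no real obstacle here: the lemma is essentially a pigeonhole statement asserting that non-singleton marked blocks consume too many of the $j+m$ available nodes to accommodate more than $2m$ of them. In fact the sharper hypothesis $j \ge 4m+1$ would already suffice, but the looser bound $j \ge 5m$ is presumably what is convenient for the subsequent step in the main argument.
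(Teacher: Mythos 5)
Your proposal is correct and is essentially the same node-counting argument as the paper's proof, which runs the identical inequality ($2(j-m-b) > j+m-b$, i.e.\ $j > 3m+b$) as a proof by contradiction rather than solving directly for the number of singletons; your observation that $j \ge 4m+1$ already suffices is also accurate. The only caveat, shared with the paper, is the degenerate case $j=m=0$, which is harmless since the lemma is invoked only for $m \ge 1$.
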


\begin{proof}
    Let $b$ be the number of marked blocks in $\alpha$ with a single node. For contradiction, assume $b \le m$. From \cite{Patzt}*{Prop. 2.5}, $\alpha$ contains $j-m$ marked blocks. So there are another $j-m-b$ marked blocks in $\alpha$, each of which has at least two nodes. Together, these blocks must contain at least $2(j-m-b)$ nodes, out of the $j+m-b$ nodes not already in a block. However, since $j \ge 5m$ and $0 \le b \le m$, it follows that $j > 3m+b$, or equivalently, $2(j-m-b) > j+m-b$. So there are not enough nodes to fill all the blocks, a contradiction.
\end{proof}

\begin{proposition} \label{Partition.fg}
    For $m \geq 0$, $M_{\C_P}(m)$ is finitely generated as an $\FI$-module. 
\end{proposition}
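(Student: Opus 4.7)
The plan is to exhibit a finite $\FI$-generating set for $M_{\C_P}(m)$ (regarded as an $\FI$-module by pullback along $G$). I will take the generating set to be the union of the bases of $M_{\C_P}(m)_n = \Hom_{\C_P}(m, n)$ as $n$ ranges over the finitely many integers with $n < 5m$; each such $R$-module is free of finite rank by \cite{Patzt}*{Prop. 2.5}, so this set is finite. The main step is then an inductive reduction: I will show that for every $p \geq 5m$, any basis diagram $\gamma \in \Hom_{\C_P}(m, p)$ has the form $G(\alpha) \circ \beta$ for some $\alpha \in \Hom_\FI(p-1, p)$ and basis diagram $\beta \in \Hom_{\C_P}(m, p-1)$, whence induction on $p$ completes the proof.

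To construct $\alpha$ and $\beta$, I invoke Lemma \ref{Lemma.for.Partition}: since $p \geq 5m$, the diagram $\gamma$ has at least $m+1$ marked blocks that consist of a single node. Because only $m$ right-side nodes exist, by pigeonhole at least one such single-node marked block must be of the form $\{-k\}$ for some $k \in [p]$. Let $\alpha$ be the order-preserving injection $[p-1] \hookrightarrow [p]$ with image $[p] \setminus \{k\}$, so that $G(\alpha)$ has horizontal edges $\{-\alpha(i), i\}$ for $i \in [p-1]$ together with the marked block $\{-k\}$. Define $\beta \in \Hom_{\C_P}(m, p-1)$ to be the diagram obtained from $\gamma$ by deleting the marked block $\{-k\}$ and then order-preservingly relabeling the left nodes $[-p] \setminus \{-k\}$ as $[-(p-1)]$. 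By construction $\beta$ is a partition of $[-(p-1)] \cup [m]$ with exactly $(p-1) - m$ marked blocks, so it is a legitimate basis element.

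The remaining task is to verify the identity $G(\alpha) \circ \beta = \gamma$, which I expect to be the main (though mild) technical step. Tracking the composition in $\C_P$: each edge $\{-\alpha(i), i\}$ of $G(\alpha)$ precisely undoes the relabeling used to define $\beta$, so every non-$\{-k\}$ block of $\gamma$ is recovered in the composition. The marked block $\{-k\}$ is contributed directly by $G(\alpha)$. Because every middle node is incident to exactly one edge of $G(\alpha)$, no middle-only components arise in the conjoined diagram; hence no spurious scalar factors of $\delta$ appear, and the composition is exactly $\gamma$ as a basis diagram. The careful bookkeeping of how marked blocks combine and the verification that no stray loops are created constitute the only real work; once this is established, induction on $p$ shows that every basis diagram of $M_{\C_P}(m)_p$ lies in the $\FI$-submodule generated by $\bigsqcup_{n < 5m} M_{\C_P}(m)_n$, completing the proof of finite $\FI$-generation.
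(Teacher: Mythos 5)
Your proposal is correct and follows essentially the same route as the paper: both use Lemma \ref{Lemma.for.Partition} to locate a single-node marked block on the left, peel it off via an $\FI$-morphism of the form $G(\text{injection missing }k)$, and induct downward to a finite generating range. The only cosmetic differences are that you phrase the injection directly as the order-preserving map omitting $k$ (rather than the paper's $\sigma^{-1}\iota$) and start the reduction at $p = 5m$ instead of $5m+1$, which yields the slightly smaller generating range $n \le 5m-1$.
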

\begin{proof}
    The result is immediate for $m = 0$; suppose $m \geq 1$. We claim that $M_{\C_P}(m)$ is generated as an $\FI$-module by basis diagrams of $M_{\C_P}(m)_k$ where $0 \leq k \leq 5m$.
    
    First, we show that $M_{\C_P}(m)_{5m+1}$ is generated by basis diagrams of $M_{\C_P}(m)_{5m}$.  By Lemma \ref{Lemma.for.Partition}, any basis diagram $\alpha \in M_{\C_P}(m)_{5m+1}$ has at least $m+1$ marked blocks of size 1. Since $\alpha$ has only $m$ nodes on the right, there must be a marked block of size 1 on the left of $\alpha$. 
    
    We can find $\sigma \in S_{5m+1}$ such that $\sigma \alpha$ has the singe-node marked block on the top left.  Let $\iota \in \Hom_{\FI}(5m,5m+1)$ be the inclusion map $[5m] \hookrightarrow [5m+1]$ and $\ov{\alpha}$ be the diagram obtained from $\sigma \alpha$ by removing the marked block on the top left (see below).  Note that $\ov{\alpha} \in M_{\C_P}(m)_{5m}$ and $\sigma \alpha = G(\iota) \ov{\alpha}$, or equivalently $\alpha = \sigma^{-1} G(\iota) \ov{\alpha}$.   
    \begin{figure}[h!]
    \centering
    \includegraphics[scale=0.7]{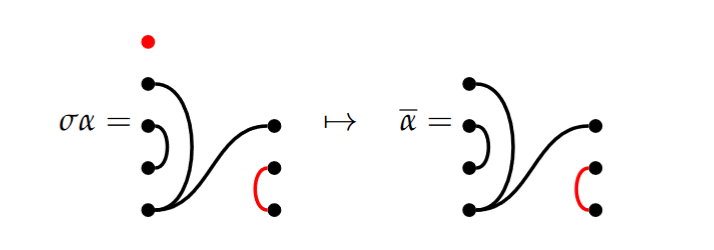}

    \caption{$\sigma\alpha$ to $\ov{\alpha}$} 
    \end{figure}

    \noindent Since $\sigma^{-1} \in S_{5m+1}$, we have $G(\sigma^{-1}) = \sigma^{-1}$ and $\alpha = G(\sigma^{-1} \iota) \ov{\alpha}$ with $\sigma^{-1} \iota \in \Hom_{\FI}(5m,5m+1)$. This implies that $M_{\C_P}(m)_{5m+1}$ is generated by basis diagrams of $M_{\C_P}(m)_{5m}$.
    
    A routine induction now shows that $M_{\C_P}(m)$ is generated as an $\FI$-module by basis diagrams of $M_{\C_P}(m)_k$ with $0 \leq k \leq 5m$.  Hence, $M_{\C_P}(m)$ is finitely generated as an $\FI$-module. 
\end{proof}

We proceed to prove a similar result for other categories. 

\begin{lemma} \label{Lemma.for.Other}
    For $A = \Br$, $\R$, or $\RBr$, if $j > 2m$, any morphism of $M_{\C_A}(m)_j$ has at least a connection from the blob to some node on the left.
\end{lemma}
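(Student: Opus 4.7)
The plan is to prove this by a direct counting argument, working from the combinatorial description of the basis of $M_{\C_A}(m)_j = \Hom_{\C_A}(m,j)$ provided by \cite{Patzt}*{Prop. 2.3} (for $A = \Br$) and by Propositions \ref{RBrprop} and \ref{Rprop} (for $A = \RBr$ and $A = \R$). In each case, a basis diagram consists of $j$ nodes on the left, $m$ nodes on the right, and a $(j-m)$-blob that is connected to \emph{exactly} $j-m$ nodes, with every ordinary node attached to at most one other node or to the blob.

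The key observation is simply that there are only $m$ nodes available on the right. Suppose for contradiction that $\alpha$ is a basis diagram in $M_{\C_A}(m)_j$ in which the blob has no connection to any node on the left. Then all $j-m$ connections of the blob must terminate at nodes on the right. Since each right node can be connected to at most one other object (here, the blob), the number of blob-to-right connections is at most $m$. Hence $j - m \le m$, i.e., $j \le 2m$, contradicting the hypothesis $j > 2m$.

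This argument is uniform in $A \in \{\Br, \RBr, \R\}$, since the only features used are that the blob has exactly $j-m$ connections and that each ordinary node has at most one incident edge. The proposition then follows, and no step presents any real obstacle — the lemma is essentially a pigeonhole statement packaged for use in the subsequent finite-generation arguments for $M_{\C_A}(m)$ as an $\FI$-module.
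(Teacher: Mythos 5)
Your argument is correct and is essentially identical to the paper's own proof: both rest on the fact that the blob has exactly $j-m$ connections while only $m$ nodes are available on the right, so $j-m>m$ forces a blob-to-left connection. The only cosmetic difference is that you phrase the pigeonhole step as a contradiction while the paper states it directly.
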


\begin{proof}
    By \cite{Patzt}*{Prop. 2.3} and Propositions \ref{RBrprop} and \ref{Rprop} above, any basis diagram of $M_{\C_A}(m)_j$ must have exactly $j-m$ connections to the blob.  Since $j-m > m$ and we only have $m$ nodes on the right, there has to be at least one connection from the node on the left to the blob.
\end{proof}

\begin{proposition}\label{RBrfgprop}
    For $A = \Br$, $\RBr$, or $\R$, the free $\C_A$-module $M_{\C_{A}}(m)$ is finitely generated as an $\FI$-module for all $m \ge 0$.
\end{proposition}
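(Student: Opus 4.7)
The plan is to mirror the proof of Proposition \ref{Partition.fg} essentially verbatim, using Lemma \ref{Lemma.for.Other} in place of Lemma \ref{Lemma.for.Partition} and the functor $F$ (rather than $G$) to witness generation. The only structural change is that the threshold drops from $5m$ to $2m$: I will show $M_{\C_A}(m)$ is generated as an $\FI$-module by basis diagrams of $M_{\C_A}(m)_k$ for $0 \le k \le 2m$, which is a finite set.

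The key step is to show that $M_{\C_A}(m)_{2m+1}$ is generated by basis diagrams of $M_{\C_A}(m)_{2m}$. Let $\alpha \in M_{\C_A}(m)_{2m+1}$ be a basis diagram, so $\alpha$ has $2m+1$ nodes on the left, $m$ on the right, and an $(m+1)$-blob. By Lemma \ref{Lemma.for.Other}, at least one left node of $\alpha$ is connected to the blob. Pick such a left node and choose $\sigma \in S_{2m+1} \subset \End_{\C_A}(2m+1)$ so that in $\sigma\alpha$ the designated blob-connected left node is moved to the top. Let $\bar\alpha \in M_{\C_A}(m)_{2m}$ be the diagram obtained from $\sigma\alpha$ by deleting this top-left node together with its edge to the blob; this is a basis diagram with $2m$ left nodes, $m$ right nodes, and an $m$-blob (the blob simply has one fewer connection, as permitted by Proposition \ref{RBrprop}, Proposition \ref{Rprop}, or Patzt's Prop.~2.3). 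Taking $\iota \colon [2m] \inj [2m+1]$ to be the standard inclusion, $F(\iota)$ is the diagram whose right $2m$ nodes are connected left-to-right to the bottom $2m$ left nodes via the identity, with the top left node connected to a $1$-blob. Composing $F(\iota)$ with $\bar\alpha$ then reinserts the removed top-left node connected to the blob and leaves all other connections of $\bar\alpha$ intact, which is precisely $\sigma\alpha$. Hence $\alpha = F(\sigma^{-1}\iota)\,\bar\alpha$, and since $\sigma^{-1}\iota \in \Hom_\FI(2m, 2m+1)$, this exhibits $\alpha$ as an $\FI$-action on an element of $M_{\C_A}(m)_{2m}$.

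A routine induction on $j \ge 2m$ then shows that every basis diagram of $M_{\C_A}(m)_j$ lies in the $\FI$-submodule generated by basis diagrams of $M_{\C_A}(m)_k$ for $0 \le k \le 2m$: the inductive step is identical, applying Lemma \ref{Lemma.for.Other} (which gives a blob-to-left connection whenever $j > 2m$) to obtain a basis diagram of $M_{\C_A}(m)_{j-1}$ and an $\FI$-morphism relating them. Since there are only finitely many basis diagrams in $\bigsqcup_{k=0}^{2m} M_{\C_A}(m)_k$, this finishes the proof.

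The main obstacle is verifying cleanly that $F(\iota)\,\bar\alpha = \sigma\alpha$; in particular one must check this simultaneously for $\Br$, $\RBr$, and $\R$, where the allowable diagrams differ (in $\Br$ no nodes are isolated, in $\R$ no horizontal edges occur within a side), and confirm that $\bar\alpha$ as constructed remains a legitimate basis diagram in the relevant category. Each of these verifications is straightforward from the description of the bases in Proposition \ref{RBrprop}, Proposition \ref{Rprop}, and Patzt's Prop.~2.3, since removing a single blob-to-left edge and the corresponding left node preserves all the relevant restrictions on the remaining diagram.
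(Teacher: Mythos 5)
Your proposal is correct and follows essentially the same route as the paper's own proof: reduce from $2m+1$ to $2m$ by using Lemma \ref{Lemma.for.Other} to find a blob-to-left connection, permute it to the top with some $\sigma \in S_{2m+1}$, delete that row, and write $\alpha = F(\sigma^{-1}\iota)\,\widetilde{\alpha}$, then induct. The only difference is that you spell out the verification that deleting the top row yields a legitimate basis diagram in each of the three cases, which the paper leaves implicit.
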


\begin{proof}
    The case $m = 0$ is straightforward; assume $m \ge 1$. We claim that $M_{\C_{A}}(m)_{2m+1}$ is generated as an $\FI$-module by basis diagrams of $\Hom_{\C_{A}}(m,2m)$. Let $\alpha$ be a basis diagram of $\Hom_{\C_{A}}(m,2m+1)$. By Lemma \ref{Lemma.for.Other}, there is a connection from the blob to some node on the left of $\alpha$. Furthermore, we can find a $\sigma \in S_{2m+1}$ such that the top left node of $\sigma \alpha$ is connected to the blob. Let $\widetilde{\alpha} \in \Hom_{\C_{A}}(m,2m)$ be the diagram obtained by deleting the top row of $\sigma \alpha$, and let $\iota \in \Hom_{\FI}(2m,2m+1)$ be the inclusion map. We have $\sigma \alpha = F(\iota) \widetilde{\alpha}$, so that $\alpha = \sigma^{-1}F(\iota) \widetilde{\alpha} = F(\sigma^{-1} \iota) \widetilde{\alpha}$. This establishes the claim.

    A standard induction argument then demonstrates that $M_{\C_{A}}(m)$ is generated as an $\FI$-module by basis diagrams of $\Hom_{\C_{A}}(m,n)$ where $n \le 2m$.
\end{proof}

Finally, we can prove the main result. 

\begin{theorem}
    For $A = P$, $\Br$, $\RBr$, or $\R$, any finitely generated $\C_{A}$-module is Noetherian.
\end{theorem}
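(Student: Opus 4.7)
The plan is to transfer the local Noetherianity of $\FI$ (established in \cite{cefn}) to $\C_A$ via pullback along the functors $F$ (for $A = \Br, \RBr, \R$) and $G$ (for $A = P$) constructed above. Given a $\C_A$-module $V$, pullback along $F$ (or $G$) produces an $\FI$-module $V^*$ with $V^*_n = V_n$, and the key observation is that any $\C_A$-submodule $W \subseteq V$ is automatically an $\FI$-submodule of $V^*$: the $\FI$-action on $V^*$ factors through the $\C_A$-action on $V$, and $W$ is by hypothesis closed under every $\C_A$-morphism. Consequently, any ascending chain of $\C_A$-submodules of $V$ is in particular an ascending chain of $\FI$-submodules of $V^*$.

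Next, I would verify that if $V$ is finitely generated as a $\C_A$-module, then $V^*$ is finitely generated as an $\FI$-module. Any finitely generated $\C_A$-module admits a surjection from a finite direct sum $\bigoplus_{i=1}^k M_{\C_A}(m_i)$ of free $\C_A$-modules. Since pullback preserves direct sums and surjections, and since Propositions \ref{Partition.fg} and \ref{RBrfgprop} establish that each $M_{\C_A}(m_i)$ is finitely generated as an $\FI$-module under pullback, it follows that $V^*$ is finitely generated as an $\FI$-module.

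Combining the two steps, local Noetherianity of $\FI$ implies that $V^*$ is Noetherian as an $\FI$-module, so every ascending chain of $\FI$-submodules of $V^*$ stabilizes; restricting to $\C_A$-submodules via the first step, every ascending chain of $\C_A$-submodules of $V$ stabilizes, proving $V$ is Noetherian. The substantive content of the theorem has already been extracted in Propositions \ref{Partition.fg} and \ref{RBrfgprop}; at this stage the only thing to check is the formal compatibility of pullback with free modules, surjections, direct sums, and submodules, all of which is routine functor-theoretic bookkeeping. No genuine obstacle remains.
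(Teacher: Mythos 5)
Your proposal is correct and follows essentially the same route as the paper: pull back along $F$ (or $G$), observe that $\C_A$-submodules are $\FI$-submodules, invoke Propositions \ref{Partition.fg} and \ref{RBrfgprop} together with the local Noetherianity of $\FI$ from \cite{cefn}. The only cosmetic difference is that the paper first reduces to the free modules $M_{\C_A}(m)$ before passing to $\FI$, whereas you push the finite generation through the surjection $\bigoplus M_{\C_A}(m_i) \to V$ and apply $\FI$-Noetherianity to $V$ directly; both are valid.
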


\begin{proof}
    Since any finitely generated $\C_A$-module $V$ admits a surjection $\bigoplus M_{\C_A}(m_i) \to V$ for some finite sequence of nonnegative integers $\{m_i\}$, it suffices to show that $M_{\C_A}(m)$ is a Noetherian $\C_A$-module for all $m \ge 0$. Let $W$ be a $\C_A$-submodule of $M_{\C_A}(m)$, then $W$ is also an $\FI$-submodule.  Propositions \ref{Partition.fg} and \ref{RBrfgprop}, combined with the local Noetherian property for $\FI$ \cite{cefn}*{Thm. A}, establish that $W$ is finitely generated as an $\FI$-submodule. Consequently, $W$ is also finitely generated as a $\C_A$-submodule, thereby demonstrating that $M_{\C_A}(m)$ is a Noetherian $\C_A$-module.
\end{proof}

\newpage 
\begin{bibdiv}
\begin{biblist}

\bib{cefn}{article}{
	author={Church, Thomas},
	author={Ellenberg, Jordan S.},
	author={Farb, Benson},
	author={Nagpal, Rohit},
	title={FI-modules over Noetherian rings},
	journal={Geom. Topol.},
	volume={18},
	date={2014},
	number={5},
	pages={2951--2984},
	issn={1465-3060},
	eprint={https://arxiv.org/abs/1210.1854}
}

\bib{Gan-Ta}{article}{
   author={Gan, Wee Liang} ,
   author = {Ta, Khoa}
   title={A noetherian criterion for sequences of modules},
   year={2024},
   eprint={{https://arxiv.org/abs/2301.01357v2}},
}

\bib{Patzt}{article}{
   author={Patzt, Peter} 
   title={Representation stability for diagram algebras},
   year={2020},
   eprint={{https://arxiv.org/abs/2009.06346v2}},
}
   
\end{biblist}
\end{bibdiv}
\end{document}